\newtheorem{theorem}{Theorem}
\newtheorem{proposition}[theorem]{Proposition}
\newtheorem{lemma}[theorem]{Lemma}
\begin{document}

\date{}
\title{Polar factorization of conformal and projective maps of the sphere in
the sense of optimal mass transport}
\author{Yamile Godoy and Marcos Salvai~\thanks{%
This work was partially supported by \textsc{Conicet} (PIP
112-2011-01-00670), \textsc{Foncyt} (PICT 2010 cat 1 proyecto 1716) \textsc{%
Secyt Univ.\thinspace Nac.\thinspace C\'{o}rdoba}} \\
{\small {CIEM - FaMAF, Conicet - Universidad Nacional de C\'ordoba }}\\
{\small {Ciudad Universitaria, 5000 C\'{o}rdoba, Argentina}} \\
{\small {ygodoy@famaf.unc.edu.ar, salvai@famaf.unc.edu.ar}}}
\maketitle

\begin{abstract}
Let $M$ be a compact Riemannian manifold and let $\mu ,d$ be the associated
measure and distance on $M$. Robert McCann obtained, generalizing results
for the Euclidean case by Yann Brenier, the polar factorization of Borel
maps $S:M\rightarrow M$ pushing forward $\mu $ to a measure $\nu $: each $S$
factors uniquely a.e.\ into the composition $S=T\circ U$, where $%
U:M\rightarrow M$ is volume preserving and $T:M\rightarrow M$ is the optimal
map transporting $\mu $ to $\nu $ with respect to the cost function $d^{2}/2$%
.

In this article we study the polar factorization of conformal and projective
maps of the sphere $S^{n}$. For conformal maps, which may be identified with
elements of $O_{o}\left( 1,n+1\right) $, we prove that the polar
factorization in the sense of optimal mass transport coincides with the
algebraic polar factorization (Cartan decomposition) of this Lie group. For
the projective case, where the group $GL_{+}\left( n+1\right) $ is involved,
we find necessary and sufficient conditions for these two factorizations to
agree.
\end{abstract}

\noindent 2010 MSC:\ 49Q20, 53A20, 53A30, 53C20, 53D12, 58E40

\smallskip

\noindent Keywords and phrases: optimal mass transport, conformal map,
projective map, $c$-convex potential, Lagrangian submanifold

\section{Introduction}

\noindent \textbf{Optimal mass transport and polar factorization. }Given two
spatial distributions of mass, the problem of Monge and Kantorovich (see for
instance \cite{Villani}) is to transport the mass from one distribution to
the other as efficiently as possible. Here efficiency is measured against a
cost function $c(x,y)$ specifying the transportation tariff per unit mass.
More precisely, let $X$ be a topological space, let $c:X\times X\rightarrow 
\mathbb{R}$ be a nonnegative cost function and let $\mu ,\,\nu $ be finite
Borel measures on $X$ with the same total mass. A map $T:X\rightarrow X$
that minimizes the functional 
\begin{equation*}
T\mapsto \int_{X}c(x,T(x))~d\mu (x)
\end{equation*}%
under the constraint that $T$ pushes forward $\mu $ onto $\nu $ (that is, $%
\nu (B)=\mu (T^{-1}(B))$ for any Borel set in $X$, which is denoted by $%
T_{\#}\mu =\nu $) is called an\textit{\ optimal transportation map between }$%
\mu $ \textit{and }$\nu $. In the following, when it is clear from the
context, we call it simply optimal.

An important particular case is the following: Let $M$ be a compact oriented
Riemannian manifold and let $\mu =$ vol be the Riemannian measure on $M$ and 
$d$ the associated distance and consider the cost $c\left( p,q\right) =\frac{%
1}{2}d\left( p,q\right) ^{2}$.

Let $S:M\rightarrow M$ be a Borel map pushing forward $\mu $ to a measure $%
\nu $. Robert McCann proved in \cite{McCann}, generalizing results for the
Euclidean case by Brenier \cite{Brenier}, that $S$ factors uniquely a.e.\
into the composition $S=T\circ U$, where $U:M\rightarrow M$ is volume
preserving and $T:M\rightarrow M$ is the optimal map transporting $\mu $ to $%
\nu $. This is called the \emph{polar factorization of }$S$\emph{\ in the
sense of optimal mass transport}. For the sake of brevity we call it the
Brenier-McCann polar factorization of $S$.

\bigskip

\noindent \textbf{Polar factorization of conformal maps of the sphere. }An
orientation preserving diffeomorphism $F$ of an oriented Riemannian manifold 
$\left( M,g\right) $ of dimension $n\geq 2$ is said to be \textsl{conformal}
if $F^{\ast }g=fg$ for some positive function $f$ on $M$.

Let $S^{n}$ be the unit sphere centered at the origin of $\mathbb{R}^{n+1}$
and let $S:S^{n}\rightarrow S^{n}$ be a conformal transformation of $S^{n}$.
Let $G=O_{o}\left( 1,n+1\right) $ be the identity component of the group
preserving the symmetric bilinear form of signature $\left( 1,n+1\right) $
on $\mathbb{R}^{n+2}$. The map $S$ can be canonically identified with an
element of $G$, thinking of $S^{n}$ as the projectivization of the light
cone in the Lorentz space $\mathbb{R}^{1,n+1}$. For $A\in G$ and $u\in S^{n}$
(in particular $\left( 1,u\right) $ is a null vector) one defines $A\cdot u$
as the unique $u^{\prime }\in S^{n}$ such that%
\begin{equation}
A\left( 
\begin{array}{c}
1 \\ 
u%
\end{array}%
\right) \in \mathbb{R}\left( 
\begin{array}{c}
1 \\ 
u^{\prime }%
\end{array}%
\right) \text{.}  \label{APuntoU}
\end{equation}%
This is well known; we refer for instance to \cite{Udo} (see also \cite%
{Salvai02}). By definition, the conformal transformations of the circle $%
S^{1}$ are given by the above action of $O_{o}\left( 1,2\right) $ on it.
They coincide with the Moebius maps of the circle, that is, the restrictions
to $S^{1}$ of the Moebius maps of $\mathbb{C}\cup \left\{ \infty \right\} $
preserving the unit disc.

The (algebraic) polar factorization of $G$ is $\exp \left( \mathfrak{p}%
\right) SO\left( n+1\right) $, where $\mathfrak{p}$ is the vector space of
symmetric matrices in the Lie algebra $o\left( 1,n+1\right) $ of $G$, that is%
\begin{equation}
\mathfrak{p}=\left\{ \left( 
\begin{array}{cc}
0 & v^{t} \\ 
v & 0%
\end{array}%
\right) \mid v\in \mathbb{R}^{n}\right\} \text{.}  \label{p}
\end{equation}%
In this context, it is usually called the Cartan decomposition of $G$. We
have the following result:

\begin{theorem}
\label{conformes} Let $S$ be a conformal transformation of the sphere $S^{n}$%
. Then the Brenier-McCann polar factorization of $S$ coincides with the
algebraic polar factorization of $S$.
\end{theorem}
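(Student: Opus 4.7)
The plan is to verify that the algebraic Cartan factors of $S = \exp(X)\cdot R$, with $X\in\mathfrak{p}$ and $R\in SO(n+1)$, already satisfy the two defining properties of the Brenier-McCann factorization; uniqueness of the latter (a.e.) will then force the two decompositions to agree. The right factor $R$ acts on $S^{n}$ as a Riemannian isometry and hence trivially preserves volume, so the substantive content is to prove that $T := \exp(X)$, viewed as a self-map of $S^{n}$ via (\ref{APuntoU}), is the optimal transport map from $\mu$ to $T_{\#}\mu$ for the cost $c = \tfrac12 d^2$. Writing $X = X_v$ with $v\in\mathbb{R}^{n}$ as in (\ref{p}), the $SO(n+1)$-invariance of the problem (isometric conjugation preserves optimality) lets us assume $v = ae_1$ with $a\geq 0$; if $a = 0$ there is nothing to prove.

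A direct computation from (\ref{APuntoU}) shows that $\exp(X_v)$ fixes $\pm e_1$, preserves each meridian through these two points, and sends a point of polar angle $\alpha\in(0,\pi)$ (measured from $e_1$) to the point of the same meridian with angle $\alpha'(\alpha) < \alpha$ given by $\cos\alpha' = (\sinh a + \cos\alpha\cosh a)/(\cosh a + \cos\alpha\sinh a)$; in particular every non-fixed point is strictly displaced along its meridian toward $e_1$ by an amount less than $\pi$, so the graph of $T$ avoids the cut locus. By McCann's theorem it suffices to exhibit a $c$-convex potential $\varphi:S^{n}\to\mathbb{R}$ with $T(x) = \exp_x(\nabla\varphi(x))$ a.e. The axial symmetry of $T$ around the axis $\mathbb{R} e_1$ suggests the ansatz $\varphi(x) = \phi(\langle x, e_1\rangle)$. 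Then $\nabla\varphi(x) = \phi'(\langle x, e_1\rangle)\bigl(e_1 - \langle x, e_1\rangle x\bigr)$ is meridional, of norm $\phi'(\cos\alpha)\sin\alpha$, and points toward $e_1$, so matching it with the computed displacement fixes $\phi$ (up to a constant) by the relation $\phi'(\cos\alpha) = (\alpha - \alpha'(\alpha))/\sin\alpha$.

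The main obstacle will be verifying $c$-convexity of this explicit $\varphi$, i.e., that $\varphi$ equals its double $c$-transform. I would attempt to write down an also axially symmetric dual potential $\psi = \varphi^c$ and to prove the Kantorovich inequality $\varphi(x) + \psi(y) \leq \tfrac12 d(x,y)^2$ pointwise, with equality on the graph of $T$, by using $\cos d(x,y) = \cos\alpha\cos\beta + \sin\alpha\sin\beta\cos\gamma$ to reduce the inequality to a one-parameter study in the azimuthal angle $\gamma$ for each pair of polar angles $(\alpha,\beta)$; equivalently, one could verify $c$-cyclical monotonicity of the graph of $T$. Once $c$-convexity of $\varphi$ is in hand, McCann's theorem identifies $T$ as the optimal transport and uniqueness of the Brenier-McCann factorization completes the proof.
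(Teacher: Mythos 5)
Your overall architecture matches the paper's: use uniqueness of the Brenier--McCann factorization to discard the orthogonal factor, reduce by isometric conjugation to a single hyperbolic generator, observe that $\exp(X_v)$ fixes the two poles and pushes each meridian toward the attracting pole, and look for an axially symmetric potential $\varphi$ with $T=\mathrm{Exp}(\mathrm{grad}\,\varphi)$; your displacement formula and the relation $\phi'(\cos \alpha)=(\alpha-\alpha'(\alpha))/\sin \alpha$ are exactly the paper's function $g$ in (\ref{g}) in disguise. The problem is that the proof stops precisely where the real work begins: the verification that this $\varphi$ is a $c$-convex potential (equivalently, the Kantorovich inequality with equality along the graph, or $c$-cyclical monotonicity of the graph) is only announced as something you ``would attempt'', not carried out. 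Without it, McCann's theorem (Theorem \ref{McCann}) cannot be invoked and the optimality of $\exp(X)$ --- hence the whole theorem --- is not established.

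For comparison, this is the step into which the paper puts all of its technical effort: Theorem \ref{CorderoConforme} (proved like Theorem \ref{CorderoProyectiva}) shows that any map of the form (\ref{Tconforme}) with $g$ odd, $2\pi$-periodic and $g'+1>0$ is optimal. There, $\phi^{c}$ and $\phi^{cc}$ are computed by showing (via the first-order condition $(dc_p)_{q_o}=-(d\phi)_{q_o}$ and the $SO(n)$-symmetry) that the suprema are attained on the great circle through the poles containing $p$, which reduces everything to $S^1$, where Cordero-Erausquin's characterization (Theorem \ref{CE}) applies because the lift $x\mapsto x+g(x)$ is the derivative of a convex function whose difference with $x^{2}/2$ descends to the circle. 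Your proposed reduction in the azimuthal angle $\gamma$ is morally the same reduction to a common meridian, but to complete it you would still need (i) the one-dimensional optimality statement itself (e.g.\ convexity of a primitive of the lifted displacement map together with the periodicity condition), and (ii) the monotonicity input that $\alpha\mapsto\alpha'(\alpha)$ is increasing, i.e.\ $g'+1>0$, which you never verify; the paper checks it explicitly, obtaining $g'(x)=(\cos x\sinh a+\cosh a)^{-1}-1$. Supplying these two ingredients would essentially reproduce the paper's proof; as written, the argument has a genuine gap at its central step.
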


\bigskip

\noindent \textbf{Polar factorization of projective maps of the sphere. }An
orientation preserving diffeomorphism $F$ of an oriented Riemannian manifold 
$M$ of dimension $n\geq 2$ is said to be \textsl{projective} if for any
geodesic $\gamma $ of $M$, $F\circ \gamma $ is a reparametrization (not
necessarily of constant speed) of a geodesic of $M$.

For $n\geq 2$, the projective transformations of $S^{n}$ are exactly those
of the form 
\begin{equation}
p\longmapsto Ap/\left\Vert Ap\right\Vert  \label{Aproyectiva}
\end{equation}%
for some $A\in GL_{+}(n+1)$, the group of linear automorphisms of $\mathbb{R}%
^{n+1}$ with positive determinant. By definition, the projective
transformations of the circle $S^{1}$ are given by the action of $%
GL_{+}\left( 2\right) $ on it as above.

\begin{theorem}
\label{proyectivas}\textbf{\ }Let $S$ be a projective transformation of the
sphere $S^{n}$. Then the Brenier-McCann polar factorization of $S$ coincides
with the algebraic polar factorization $PO$ of $S$ \emph{(}that is, with
positive definite self adjoint $P$ and orthogonal $O$\emph{) }if and only if 
$P$ has at most two distinct eigenvalues.
\end{theorem}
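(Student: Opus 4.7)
The plan is to reduce Theorem~\ref{proyectivas} to the question of when the map $T_P(p) := Pp/\|Pp\|$ is itself the optimal transport from $\mu$ to $(T_P)_{\#}\mu$. Since $O\in SO(n+1)$ acts by isometries on $S^n$, $U(p) := Op$ is volume preserving, so the algebraic polar factorization $A = PO$ always gives a candidate decomposition $S = T_P \circ U$. By uniqueness in the Brenier--McCann theorem, this agrees with the Brenier--McCann factorization of $S$ if and only if $T_P$ is optimal, which by McCann's characterization is equivalent to the existence of a $c$-concave potential $\phi:S^n\to\mathbb{R}$ with $T_P(p) = \exp_p(\nabla\phi(p))$, where $c = d^2/2$. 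The theorem thus reduces to characterizing the $P$'s for which such a $\phi$ exists.

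\emph{Sufficiency (at most two distinct eigenvalues).} Assume $P$ has two distinct eigenvalues $\lambda_1,\lambda_2$ with eigenspaces $V_1, V_2$ (the one-eigenvalue case gives $T_P=\mathrm{id}$, trivially optimal). Write $p = p_1+p_2$ with $p_i\in V_i$ and set $t(p) := \|p_1\|^2\in[0,1]$. I look for $\phi$ of the form $\phi(p) = F(t(p))$; the ansatz is natural because the target measure $(T_P)_{\#}\mu$ is $O(V_1)\times O(V_2)$-invariant, so by uniqueness the optimal potential may be taken $O(V_1)\times O(V_2)$-invariant. Since $\nabla_{S^n}t = 2(p_1 - tp)$ has norm $2\sqrt{t(1-t)}$, expanding both sides of $\exp_p(\nabla\phi) = T_P(p)$ in the basis $\{p_1, p_2\}$ reduces the equation to the scalar ODE
\begin{equation*}
F'(t) \;=\; \frac{1}{2\sqrt{t(1-t)}}\,\arctan\!\left(\frac{(\lambda_1-\lambda_2)\sqrt{t(1-t)}}{\lambda_1 t + \lambda_2(1-t)}\right),
\end{equation*}
which has a smooth solution on $[0,1]$. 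Positivity of $\langle p,T_P(p)\rangle$ gives $\|\nabla\phi\|_\infty<\pi/2$, well within the injectivity radius, and the $c$-concavity of $\phi$ follows from a Hessian computation further simplified by the $O(V_1)\times O(V_2)$-invariance, which reduces everything to a two-dimensional slice.

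\emph{Necessity (three distinct eigenvalues).} Let $e_1,e_2,e_3$ be unit eigenvectors of $P$ with pairwise distinct eigenvalues, and let $\Sigma := S^n \cap \mathrm{span}(e_1,e_2,e_3) \cong S^2$. Since $\mathrm{span}(e_1,e_2,e_3)$ is $P$-invariant, $\Sigma$ is $T_P$-invariant, and for $p\in\Sigma$ the vector $V(p) := \exp_p^{-1}(T_P(p))$ is tangent to $\Sigma$, because the great circle joining $p$ to $T_P(p)$ lies in $\mathrm{span}(e_1,e_2,e_3)$. Consequently, if $T_P = \exp\circ\nabla_{S^n}\phi$ on all of $S^n$, then $V|_\Sigma = \nabla_\Sigma(\phi|_\Sigma)$, so the 1-form $\omega := (V|_\Sigma)^\flat$ on $\Sigma$ is exact, hence closed. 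The strategy is to derive a contradiction by computing $d\omega$ at a suitably chosen point of $\Sigma$ and showing that it is nonzero whenever $\lambda_1,\lambda_2,\lambda_3$ are pairwise distinct. The main obstacle is this explicit computation: the formulas for $V$ involve $\arccos$ and trigonometric normalizations, but they simplify substantially at a symmetric point such as the centroid $(e_1+e_2+e_3)/\sqrt{3}$ or a midpoint of a side of the spherical triangle $\{e_1,e_2,e_3\}$, where $d\omega$ reduces to an algebraic expression in $\lambda_1,\lambda_2,\lambda_3$ whose vanishing forces two eigenvalues to coincide.
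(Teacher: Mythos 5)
Your reduction (the orthogonal factor is volume preserving, so by uniqueness everything hinges on whether $T_P(p)=Pp/\Vert Pp\Vert$ is optimal) is exactly the paper's starting point, and your necessity criterion is in substance the paper's as well: closedness of the $1$-form $p\mapsto\langle \exp_p^{-1}T_P(p),\cdot\rangle$ is precisely the condition that the graph of $T_P$ be Lagrangian for the Kim--McCann--Warren form, which is what Proposition \ref{Lagrangian} disproves. The problem is that in both directions the decisive step is left undone. For sufficiency, what you actually verify is only that $T_P=\mathrm{Exp}(\nabla\phi)$ for an explicit rotationally invariant $\phi$ with $\Vert\nabla\phi\Vert<\pi/2$. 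McCann's theorem requires in addition that $\phi$ be a $c$-convex potential ($\phi^{cc}=\phi$), which for the cost $d^2/2$ on $S^n$ is a \emph{global} condition; it is not delivered by a pointwise ``Hessian computation'' plus staying inside the injectivity radius, and no quantitative $c$-convexity criterion is invoked. This verification is where essentially all the work lies: the paper does it (Theorem \ref{CorderoProyectiva}) by using the $O(k)\times O(\ell)$-invariance to reduce the supremal convolutions to a circle, where optimality is certified by Cordero-Erausquin's theorem on $T^1$, together with the double-cover lemma relating the projective and conformal maps of $S^1$. Your sketch contains no substitute for this step.

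For necessity, you explicitly defer ``the main obstacle,'' namely showing $d\omega\neq 0$ somewhere, and part of your proposed shortcut cannot work: at a midpoint of a side of the spherical triangle (a point of $\Sigma$ with one coordinate zero) the reflection fixing the corresponding coordinate plane commutes with $P$, fixes the point, and reverses the orientation of $T_p\Sigma$, so equivariance of $V$ forces $d\big((V|_\Sigma)^\flat\big)=0$ there; this is consistent with the paper's computation, where the obstruction is proportional to $x_0x_1x_2$ and hence one must work at points with \emph{all} coordinates nonzero, where no symmetry simplifies the $\arccos$ expressions. (The paper handles this by choosing adapted bases along the minimizing geodesic and reducing the asymmetry of $dT_p$ to the equation $\sin(2d)=2d$.) There is also a regularity gap you should close before restricting to the measure-zero slice $\Sigma$: McCann's representation $T_P=\mathrm{Exp}(\nabla\phi)$ holds only a.e.\ with $\phi$ merely Lipschitz, so you must first argue that, since $\nabla\phi$ agrees a.e.\ with the smooth field $V$, the potential is $C^1$ and the identity holds everywhere; this is fixable, but as written the restriction to $\Sigma$ is not justified. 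So the proposal outlines a route parallel to the paper's but proves neither implication.
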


We would like very much to know explicitly, if possible, the Brenier-McCann
polar factorization of the projective map of $S^{2}$ induced by, say, diag~$%
\left( 1,2,3\right) $.

\bigskip

Next we give the main arguments in the proofs of the theorems. A projective
map of $S^{n}$ induced by a positive definite self adjoint transformation of 
$\mathbb{R}^{n+1}$ with at most two distinct eigenvalues preserves meridians
of the sphere through points lying in two fixed orthogonal subspaces. It
turns out to be a particular case of the optimal maps considered in Theorem %
\ref{CorderoProyectiva} in the preliminaries, that we prove using the
characterization by McCann of the optimal maps on Riemannian manifolds
involving $c$-convex potentials. These are a powerful theoretical tool, in
general arduous to deal with in concrete cases, but the fact that the
optimal maps of the circle are well known and the symmetries of our problem
allowed us to apply them.

Conformal maps on $S^{n}$ induced by symmetric transformations of Euclidean
space behave similarly. We prove Theorem \ref{conformes} by verifying that
the conformal map induced by the symmetric part of the polar algebraic
decomposition satisfies the hypotheses of Theorem \ref{CorderoConforme},
which is analogous to Theorem \ref{CorderoProyectiva}.

We comment on the proof of Theorem \ref{proyectivas}. When $P$ has at most
two distinct eigenvalues we check that the projective map induced by $P$
satisfies the hypotheses of Theorem \ref{CorderoProyectiva} using that
conformal maps of the circle double cover the projective maps of the circle.
In the case that $P$ has at least three distinct eigenvalues, we resort to a
necessary condition for a map on a Riemannian manifold $M$ to be optimal:
that its graph be a.e. a Lagrangian submanifold with respect to a certain
symplectic form defined a.e. on $M\times M$ in terms of the cost function.

\bigskip

In \cite{Salvai02}\ and \cite{salvai07} (see also \cite{Emmanuele}) the
second author et al.\negthinspace\ studied force free conformal and
projective motions of $S^{n}$. In particular, they found some geodesics $%
\sigma $ of the groups $SL\left( n+1,\mathbb{R}\right) $ and $O_{o}\left(
1,n+1\right) $ endowed with the not invariant (as it happens with non-rigid
motions) Riemannian metric given by the kinetic metric. The curves $\sigma $
induce curves of measures $t\mapsto \left( \sigma _{t}\right) _{\#}\left( 
\text{vol}\right) $. We wonder whether this is related, with conformal or
projective constraints, with the survey paper \cite{Buttazzo} on
transportation problems in which a given mass dynamically moves from an
initial configuration to a final one (see also \cite{BenamouB} and \cite%
{Brasco}).

\section{Preliminaries}

Let $\left( M,g\right) $, $\mu =$ vol$_{M}$ and $c=d^{2}/2$ be as in the
introduction. We recall the result by McCann in \cite{McCann}, which gives
an expression for the optimal map transporting $\mu $ to a measure $\nu $.
Given a lower semi-continuous function $\phi :M\rightarrow \mathbb{R}$, the
supremal convolution of $\phi $ is the function $\phi ^{c}:M\rightarrow 
\mathbb{R}$ defined by 
\begin{equation}
\phi ^{c}(p)=\sup_{q\in M}\{-c(p,q)-\phi (q)\}\text{.}  \label{c-transf}
\end{equation}%
(We adopt the notation of \cite{Loeper}, using supremal instead of infimal
convolution.)

\begin{theorem}[McCann]
\label{McCann} The optimal map $T$ between $\mu $ and $T_{\#}(\mu )$ can be
expressed as a gradient map, that is, 
\begin{equation*}
T(p)=\text{\emph{Exp}}_{p}\left( \text{\emph{grad}}_{\,p}\phi \right)
\end{equation*}%
a.e., where \emph{Exp} is the geodesic exponential map of $M$ and $\phi $ is
a $c$-convex potential, that is, $\phi :M\rightarrow \mathbb{R}$ is a lower
semi-continuous function satisfying $\phi ^{cc}=\phi $.
\end{theorem}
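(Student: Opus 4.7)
The plan is to follow the classical Kantorovich duality approach, adapted to the Riemannian setting. I would first relax the Monge problem to minimizing $\int_{M\times M} c\,d\pi$ over couplings $\pi\in\Pi(\mu,T_{\#}\mu)$; since $M\times M$ is compact and $c$ is continuous, weak-$\ast$ compactness of probability measures yields an optimal $\pi^{\ast}$, and the coupling $(\mathrm{id}\times T)_{\#}\mu$ is itself a legitimate choice. By a Rockafellar-type chain argument, $\operatorname{supp}(\pi^{\ast})$ is $c$-cyclically monotone and hence contained in the $c$-superdifferential of some $c$-convex potential $\phi:M\to\mathbb{R}$ with $\phi^{cc}=\phi$; explicitly,
\[
\phi(p)+\phi^{c}(q)+c(p,q)=0\ \text{on}\ \operatorname{supp}(\pi^{\ast}),
\]
with $\le 0$ elsewhere on $M\times M$. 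One constructs $\phi$ as the supremum, over finite chains of points in the support, of the associated telescoping $c$-sums, and $\phi^{cc}=\phi$ is then automatic.

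Next I would analyse the regularity of $\phi$. Each $p\mapsto -c(p,q)-\phi^{c}(q)$ is locally semiconcave because $c=d^{2}/2$ is so away from the cut locus, hence $\phi$, being their supremum, is locally semiconcave; in particular $\phi$ is Lipschitz and, by Alexandrov's theorem, differentiable at $\mu$-a.e.\ $p$. The core computation is then immediate: fix such a $p$ and $(p,q)\in\operatorname{supp}(\pi^{\ast})$. Since $\phi(p)\ge -c(p,q')-\phi^{c}(q')$ for every $q'$, with equality at $q'=q$, the function $p\mapsto -c(p,q)-\phi^{c}(q)$ touches $\phi$ from below at $p$. Differentiating in $p$ gives
\[
\nabla\phi(p)=-\nabla_{p}c(p,q),
\]
and for $c=d^{2}/2$ with $q$ outside the cut locus of $p$ the identity $\nabla_{p}c(p,q)=-\operatorname{Exp}_{p}^{-1}(q)$ forces $q=\operatorname{Exp}_{p}(\nabla\phi(p))$. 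Taking $\pi^{\ast}=(\mathrm{id}\times T)_{\#}\mu$, one concludes $T(p)=\operatorname{Exp}_{p}(\nabla\phi(p))$ for $\mu$-a.e.\ $p$.

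The principal obstacle is the measure-theoretic and geometric subtlety imposed by the cut locus: for $\mu$-a.e.\ $p$ at which $\nabla\phi(p)$ exists one must verify that $T(p)$ actually avoids the cut locus of $p$, so that the identity $\nabla_{p}c(p,q)=-\operatorname{Exp}_{p}^{-1}(q)$ applies. This is the genuinely Riemannian difficulty absent from Brenier's Euclidean proof; it is handled via the zero-measure character of the cut locus together with the Lipschitz regularity of gradients of semiconcave functions. A further delicate point, needed for the a.e.\ uniqueness of the polar factorization $S=T\circ U$ recalled in the introduction, is that $\phi$ is determined up to an additive constant; this follows from injectivity of the gradient map on the common set of differentiability of any two competing potentials.
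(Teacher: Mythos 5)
A point of orientation first: the paper does not prove this statement at all --- Theorem \ref{McCann} is quoted as a known result from \cite{McCann} and used as a black box in the proofs of Theorems \ref{conformes} and \ref{proyectivas}, so there is no internal proof to compare yours with. Your sketch reproduces the architecture of McCann's original argument (Kantorovich relaxation, $c$-cyclical monotonicity of the support, construction of a $c$-convex potential, semiconcavity and a.e.\ differentiability, then the identity $\nabla_{p}c(p,q)=-\mathrm{Exp}_{p}^{-1}(q)$), which is the right route.

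Two steps as written are genuinely gapped. First, you take $\pi^{\ast}=(\mathrm{id}\times T)_{\#}\mu$, i.e.\ you assume that the coupling induced by the Monge-optimal map $T$ is Kantorovich-optimal; this is not automatic, since a priori the Kantorovich infimum could be strictly smaller than the Monge infimum. McCann closes this by first showing that the Kantorovich optimizer is unique and is itself induced by a map of the form $\mathrm{Exp}_{p}(\mathrm{grad}_{\,p}\phi)$; equality of the two infima then follows, and uniqueness forces any optimal $T$ to coincide with that map $\mu$-a.e. Second, your handling of the cut locus is the wrong mechanism: the cut locus of each fixed point has measure zero in $M$, but the pairs at issue are $(p,T(p))$ with $T$ unknown, so no Fubini-type argument applies, and gradients of semiconcave functions need not be Lipschitz. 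The correct resolution --- the genuinely Riemannian lemma in McCann's proof --- is pointwise: if $\phi$ is differentiable at $p$ and $(p,q)$ realizes the supremum, then $-\phi(\cdot)-\phi^{c}(q)$ touches $c(\cdot,q)$ from below at $p$; since $c(\cdot,q)=d(\cdot,q)^{2}/2$ is superdifferentiable everywhere, it is then differentiable at $p$, which already implies that the minimizing geodesic from $p$ to $q$ is unique and that $\nabla_{p}c(p,q)=-\mathrm{Exp}_{p}^{-1}(q)$ holds, with no need to exclude the cut locus beforehand. Relatedly, the semiconcavity you invoke should be stated as: $d(\cdot,q)^{2}/2$ is locally semiconcave on all of $M$, uniformly in $q$ (not merely away from the cut locus); that uniformity is what makes the supremum defining $\phi$ semiconcave, hence Lipschitz and a.e.\ differentiable.
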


Cordero-Erausquin characterized in \cite{Cordero} (see also \cite{McCann})
the optimal transportation maps on tori. We write below an equivalent
statement taken
from subsection 6.2 in \cite{N.Q.Le}.

\begin{theorem}[Cordero-Erausquin]
\label{CE} Let $\mu $ and $\nu $ be two probability measures on the torus $%
T^{n}=\mathbb{R}^{n}/2\pi \mathbb{Z}^{n}$ with the same total mass which are
absolutely continuous with respect to the Lebesgue measure and let $%
T:T^{n}\rightarrow T^{n}$ be a map pushing forward $\mu $ to $\nu $. Then $T$
is the optimal transportation map between $\mu $ and $\nu $ if and only if
it lifts a.e. to a map $\widetilde{T}:\mathbb{R}^{n}\rightarrow \mathbb{R}
^{n}$ of the form $\widetilde{T}=$ \emph{grad} $\psi $, where $\psi $ is a
convex function on $\mathbb{R}^{n}$ such that $\psi\left(
x\right) -\left\vert x\right\vert ^{2}/2$ descends to $T^{n}$.
\end{theorem}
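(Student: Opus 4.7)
The plan is to deduce Cordero-Erausquin's theorem from McCann's Theorem~\ref{McCann} by exploiting the fact that $T^{n}$ is flat, which reduces the $c$-convex machinery on the torus to the classical Legendre transform on $\mathbb{R}^{n}$.

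First I would treat necessity. If $T$ is optimal, Theorem~\ref{McCann} gives $T(p) = \mathrm{Exp}_{p}(\mathrm{grad}_{p}\phi)$ almost everywhere for some $c$-convex potential $\phi$ on $T^{n}$. Because the torus is flat, $\mathrm{Exp}_{p}(v) = p + v \pmod{2\pi \mathbb{Z}^{n}}$, so if $\widetilde{\phi}:\mathbb{R}^{n}\to\mathbb{R}$ denotes the $2\pi\mathbb{Z}^{n}$-periodic lift of $\phi$, then $T$ lifts to $\widetilde{T}(x) = x + \mathrm{grad}\,\widetilde{\phi}(x) = \mathrm{grad}\,\widetilde{\psi}(x)$, where $\widetilde{\psi}(x) := |x|^{2}/2 + \widetilde{\phi}(x)$. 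Since $\widetilde{\psi}(x) - |x|^{2}/2 = \widetilde{\phi}(x)$ is periodic, it descends to $T^{n}$, and the only thing left to establish is convexity of $\widetilde{\psi}$.

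The bridge is the following computation: using $d_{T^{n}}(x,y)^{2} = \min_{k\in\mathbb{Z}^{n}}|x-y-2\pi k|^{2}$ together with the periodicity of $\widetilde{\phi}$, the lift of $\phi^{c}$ becomes an unconstrained supremum over $\mathbb{R}^{n}$, and after expanding the square,
\[
\widetilde{\phi^{c}}(x) + \tfrac{|x|^{2}}{2} \;=\; \sup_{z\in\mathbb{R}^{n}}\bigl\{\langle x,z\rangle - \widetilde{\psi}(z)\bigr\} \;=\; \widetilde{\psi}^{\ast}(x).
\]
Thus, modulo the shift $\phi \leftrightarrow \psi = |\cdot|^{2}/2 + \phi$, the supremal convolution on $T^{n}$ is conjugate to the Legendre transform on $\mathbb{R}^{n}$. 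Consequently, $\phi^{cc}=\phi$ is equivalent to $\widetilde{\psi}^{\ast\ast}=\widetilde{\psi}$, i.e., to $\widetilde{\psi}$ being convex and lower semi-continuous. Sufficiency follows by reversing the correspondence: given $\widetilde{T}=\mathrm{grad}\,\widetilde{\psi}$ with $\widetilde{\psi}$ convex and $\widetilde{\psi}-|\cdot|^{2}/2$ periodic, define $\phi := \psi - |\cdot|^{2}/2$ on $T^{n}$; the same Legendre computation shows that $\phi$ is $c$-convex, and then $T(p) = \mathrm{Exp}_{p}(\mathrm{grad}_{p}\phi)$ is optimal by Theorem~\ref{McCann}.

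The main obstacle I anticipate is the identity $\widetilde{\phi^{c}}(x) = \sup_{z\in\mathbb{R}^{n}}\{-|x-z|^{2}/2 - \widetilde{\phi}(z)\}$. One must justify the two-step collapse that replaces a supremum over $y\in T^{n}$ followed by a minimum over lattice translates $k\in\mathbb{Z}^{n}$ with a single unconstrained supremum over $z=y+2\pi k\in\mathbb{R}^{n}$, and then verify that the resulting function on $\mathbb{R}^{n}$ really is the periodic lift of $\phi^{c}$. Once this identification is in place, the equivalence $\phi^{cc}=\phi \Longleftrightarrow \widetilde{\psi}^{\ast\ast}=\widetilde{\psi}$ is pure convex-analysis book-keeping, and the theorem reduces cleanly to McCann's general statement.
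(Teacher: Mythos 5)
Your proposal is essentially correct, but note that the paper does not prove Theorem~\ref{CE} at all: it is quoted from Cordero-Erausquin \cite{Cordero} (with the formulation taken from \cite{N.Q.Le}), so there is no in-paper proof to match. Your route --- flatness of $T^{n}$, the collapse of the supremum over $T^{n}$ and the minimum over lattice translates into a single supremum over $\mathbb{R}^{n}$, and the resulting identity $\widetilde{\phi^{c}}+|\cdot|^{2}/2=\widetilde{\psi}^{\ast}$ turning $c$-convexity of $\phi$ into $\widetilde{\psi}^{\ast\ast}=\widetilde{\psi}$, i.e.\ convexity --- is the standard reduction of the periodic quadratic cost to the Legendre transform, and it is sound: the sup/min interchange is justified exactly as you say because $\widetilde{\phi}$ is periodic, and the biconjugation equivalence only needs $\widetilde{\psi}$ proper and lower semi-continuous, which holds since $\phi$ is bounded on the compact torus (resp.\ since a finite convex function on $\mathbb{R}^{n}$ is continuous). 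Two caveats on the reduction to Theorem~\ref{McCann}: first, as stated in the paper that theorem takes $\mu=\mathrm{vol}_{M}$, whereas Theorem~\ref{CE} allows any absolutely continuous $\mu$, so you must invoke the general form of McCann's results in \cite{McCann} (optimal maps from absolutely continuous sources are $c$-convex gradient maps); second, your sufficiency step uses the converse direction --- that any map $\mathrm{Exp}_{p}(\mathrm{grad}_{p}\phi)$ with $\phi^{cc}=\phi$ pushing $\mu$ to $\nu$ is the optimal map --- which is not literally contained in the quoted statement of Theorem~\ref{McCann}, though it is in \cite{McCann} and is exactly how the authors themselves use Theorem~\ref{McCann} in the proof of Theorem~\ref{CorderoProyectiva}; it would be worth making these citations explicit. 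Historically the dependence runs the other way (Cordero-Erausquin's periodic result predates and motivated McCann's Riemannian theorem), but as a logical derivation your argument is fine and has the advantage of unifying Theorem~\ref{CE} with the $c$-convexity framework the paper actually works in.
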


Conformal or projective maps of $S^{n}$ induced by self adjoint
transformations of $\mathbb{R}^{n+1}$ (with at most two distinct
eigenvalues, in the projective case) have a particular behavior encompassed
in the following two theorems. Besides, for $n=1$, we have a particular case
of the theorem above. We denote by $\{e_{0},\cdots ,e_{n}\}$ the canonical
basis of $\mathbb{R}^{n+1}$.

\begin{theorem}
\label{CorderoProyectiva}Let $f:\mathbb{R}\rightarrow \mathbb{R}$ be a
smooth $\pi $-periodic odd function with $f^{\prime }+1>0$ and write $%
\mathbb{R}^{n+1}=\mathbb{R}^{k}\oplus \mathbb{R}^{\ell }$. Then $%
T:S^{n}\rightarrow S^{n}$, 
\begin{equation}
T\left( \cos x~u,\sin x~v\right) =\left( \cos \left( x+f\left( x\right)
\right) u,\sin \left( x+f\left( x\right) \right) v\right) \text{,}
\label{Tproyectiva}
\end{equation}%
where $x\in \mathbb{R}$, $u\in \mathbb{R}^{k}$, $v\in \mathbb{R}^{\ell }$
and $\left\Vert u\right\Vert =\left\Vert v\right\Vert =1$, is well defined
and is the optimal transportation map among all maps sending \emph{vol}$%
_{S^{n}}$ to $T_{\#}\left( \emph{vol}_{S^{n}}\right) $.\ 
\end{theorem}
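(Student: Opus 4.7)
The plan is to apply McCann's theorem (Theorem~\ref{McCann}) by exhibiting a $c$-convex potential $\phi:S^n\to\mathbb{R}$ such that $T(p)=\mathrm{Exp}_p(\mathrm{grad}_p\,\phi)$ almost everywhere. First I would address well-definedness. The parametrization $p=(\cos x\,u,\sin x\,v)$ has the redundancies $(x,u,v)\sim(x+2\pi,u,v)$, $(x,u,v)\sim(x+\pi,-u,-v)$, and the vector $v$ (when $x=0$) or $u$ (when $x=\pi/2$) is irrelevant. The $\pi$-periodicity of $f$ absorbs the shift by $\pi$, while $f$ being odd and $\pi$-periodic forces $f(0)=f(\pi/2)=0$, which exactly kills the boundary ambiguities.

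The natural candidate for the potential is $\phi(p)=F(x(p))$, where $x:S^n\to[0,\pi/2]$ is the function determined by $\|p_1\|=\cos x(p)$, $\|p_2\|=\sin x(p)$, and $F(t)=\int_0^t f(s)\,ds$. On the open full-measure set $\{0<x<\pi/2\}$ a short computation gives $\mathrm{grad}_p\,\phi=f(x)\,\tau_p$, where $\tau_p=(-\sin x\,u,\cos x\,v)$ is the unit meridian tangent. Since the geodesic of $S^n$ through $p$ in the direction $\tau_p$ is $s\mapsto(\cos(x+s)u,\sin(x+s)v)$, the exponential $\mathrm{Exp}_p(\mathrm{grad}_p\,\phi)$ lands at $(\cos(x+f(x))u,\sin(x+f(x))v)=T(p)$, as required.

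The heart of the argument is verifying $\phi^{cc}=\phi$. I plan to reduce this to a one-dimensional problem by the following geometric observation: for $p=(\cos x\,u,\sin x\,v)$ and any $y\in[0,\pi/2]$, the infimum of $d(p,q)$ over $\{q\in S^n:x(q)=y\}$ equals $|x-y|$ and is attained at $(\cos y\,u,\sin y\,v)$, the point on the same meridian as $p$. This follows from $\langle p,q\rangle\le\cos(x-y)$ with equality only on that meridian. Since $\phi$ depends only on $x(\cdot)$, one then obtains
\begin{equation*}
\phi^c(p)=H(x(p)),\qquad H(x)=\sup_{y\in[0,\pi/2]}\left\{-\tfrac{1}{2}(x-y)^2-F(y)\right\},
\end{equation*}
and, by the same mechanism, $\phi^{cc}(p)=K(x(p))$ with $K$ obtained from $H$ in the same way.

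The remaining 1D identity $K=F$ is a Legendre--Fenchel duality applied to $\psi(x)=F(x)+x^2/2$: since $\psi''=f'+1>0$, $\psi$ is smooth and strictly convex on $[0,\pi/2]$. Extending by $+\infty$ outside and writing $\bar\psi$ for the extension, one identifies $H(y)+y^2/2=\bar\psi^*(y)$ and $K(x)+x^2/2=\sup_{y\in[0,\pi/2]}\{xy-\bar\psi^*(y)\}$. Standard duality gives $\bar\psi^{**}=\bar\psi$ with the sup taken over $\mathbb{R}$, and the step I expect to be the main obstacle is to check that restricting this sup to $y\in[0,\pi/2]$ produces the same value. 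The redemption is that $f(0)=f(\pi/2)=0$ makes $x\mapsto\psi'(x)=x+f(x)$ map $[0,\pi/2]$ into itself, so the unique maximizer in $\bar\psi^{**}(x)$ automatically lies in $[0,\pi/2]$. Hence $K=F$, so $\phi$ is $c$-convex, and McCann's theorem yields the optimality of $T$.
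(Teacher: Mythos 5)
Your proposal is correct, and its decisive step is genuinely different from the paper's. Both arguments use the same potential $\phi=F\circ x(\cdot)$ with $F(t)=\int_0^t f(s)\,ds$, check $T=\mathrm{Exp}(\mathrm{grad}\,\phi)$ a.e., and reduce everything to the verification $\phi^{cc}=\phi$ via Theorem \ref{McCann}; the difference lies in how the $c$-transforms are computed. The paper locates the maximizer $q_o$ in the supremal convolution through the first-order condition $(dc_p)_{q_o}=-(d\phi)_{q_o}$ and a comparison of kernels, then uses $SO(k)\times SO(\ell)$-invariance to reduce to a fixed meridian circle, where $c$-convexity of the restricted potential comes from the optimality of $T|_S$, itself established through Cordero-Erausquin's characterization on $S^1$ (Theorem \ref{CE}). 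You instead exploit that $\phi$ depends only on $x(\cdot)\in[0,\pi/2]$ and that $\mathrm{dist}\bigl(p,\{x(\cdot)=y\}\bigr)=|x(p)-y|$ (from $\langle p,q\rangle\le\cos(x-y)$), so both $c$-transforms collapse to one-dimensional transforms on $[0,\pi/2]$ with cost $\tfrac12(x-y)^2$, and the identity $K=F$ becomes Legendre--Fenchel biconjugation of $\psi(x)=F(x)+x^2/2$; the only delicate point, which you handle correctly, is that the maximizer $\psi'(x)=x+f(x)$ stays in $[0,\pi/2]$, which is precisely where $f'+1>0$ and $f(0)=f(\pi/2)=0$ enter. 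This buys a self-contained, elementary argument (no appeal to Theorem \ref{CE} and no group-invariance reduction) and makes the role of the hypotheses transparent, and it would adapt equally well to Theorem \ref{CorderoConforme}. Two small points to tidy up when writing it out: your list of parametrization redundancies omits $(x,u,v)\sim(-x,u,-v)$ (and $(x,u,v)\sim(\pi-x,-u,v)$), which is exactly where the oddness of $f$ is used away from the boundary values of $x$, as in the paper's $\varepsilon,\delta$ case analysis; and the phrase ``the unique maximizer'' is only literally correct for $x\in(0,\pi/2)$ --- at the endpoints either argue by continuity of $K$ and $F$ or note that $y=0$, respectively $y=\pi/2$, is a maximizer lying in the interval.
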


\begin{proof}
First we show that $T$ is well defined. Let $p=\left( \cos x~u,\sin
x~v\right) $ and suppose that $p=\left( \cos y~U,\sin y~V\right) $, where $%
U\in \mathbb{R}^{k}$ and $V\in \mathbb{R}^{\ell }$ are unit vectors. Then 
\begin{equation*}
\cos y=\varepsilon \cos x\text{,\ \ \ \ }U=\varepsilon u\text{,\ \ \ \ }\sin
y=\delta \sin x\text{\ \ \ \ \ and \ \ \ \ \ }V=\delta v
\end{equation*}%
for some $\varepsilon ,\delta =\pm 1$. If $\varepsilon =1$, then $y=\delta
x+2m\pi $ for some $m\in \mathbb{Z}$ and so 
\begin{equation}
T\left( \cos y~U,\sin y~V\right) =\left( \cos \left( \delta x+f\left( \delta
x\right) \right) u,\sin \left( \delta x+f\left( \delta x\right) \right)
\delta v\right) \text{,}  \label{Tz}
\end{equation}%
which coincides with the right hand side of (\ref{Tproyectiva}), as desired 
(we used that $f$ is odd and that a $\pi $-periodic function is in
particular $2\pi $-periodic). Next we consider the case $\varepsilon =-1$.
We have $y=\pi -\delta x+2m\pi $ for some $m\in \mathbb{Z}$ and so $f\left(
y\right) =f\left( \pi -\delta x\right) =f\left( -\delta x\right) =-\delta
f\left( x\right) $, since $f$ is $\pi $-periodic and odd. Finally, using that $\cos \left(\pi -\delta t\right) =-\cos( t)$ and $\sin \left(\pi -\delta t\right) =\delta \sin(t)$, for each $t\in \mathbb{R}$, we obtain (\ref{Tproyectiva}) again.

Now we prove that $T$ is optimal. We consider first the case $k=\ell =1$ ($%
n=1$). We may suppose $u=e_{0}$ and $v=e_{1}$. Then, after the canonical
identification $\mathbb{R}^{2}=\mathbb{C}$ we have that $T\left(
e^{ix}\right) =e^{i\left( x+f\left( x\right) \right) }$, whose lift $%
\widetilde{T}:\mathbb{R}\rightarrow \mathbb{R}$ with $\widetilde{T}\left(
0\right) =0$ (notice that $T\left( 1\right) =1$ since $f$ is odd) is given
by $\widetilde{T}\left( x\right) =x+f\left( x\right) $. Now, a primitive of $%
\widetilde{T}$ is convex since $1+f^{\prime }>0$. Also, $\psi \left( x\right) -x^{2}/2$
descends to the circle, since $\int_{0}^{2\pi }f=0$ ($f$ is odd and $\pi $%
-periodic). By the result of
Cordero-Erausquin in Theorem \ref{CE} for $n=1$, $T$ is optimal.

We return to the general case. We consider the vector field $W$ on $S^n$ defined by%
\begin{equation*}
W\left( p\right) =f\left( x\right) \left( -\sin x~u,\cos x~v\right)
\end{equation*}%
for $p=\left( \cos x~u,\sin x~v\right) $. One can check as above that $W$ is
well defined. We have that 
\begin{equation*}
T\left( p\right) =\text{Exp}_{p}\left( W\left( p\right) \right) \text{.}
\end{equation*}%
By the result of McCann in Theorem \ref{McCann} it suffices to show that $W$
is the gradient of a function $\phi $ with $\phi ^{cc}=\phi $. We propose 
\begin{equation*}
\phi \left( \cos x~u,\sin x~v\right) =\int_{0}^{x}f\left( s\right) ~ds\text{%
, }
\end{equation*}
which is well defined since $\int_{a}^{a+\pi }f\left( s\right) ~ds=0$ for
any $a\in \mathbb{R}$ (indeed, the fact that $f$ is $\pi $-periodic and continuous implies that $\int_{a}^{a+\pi }f\left( s\right) ~ds=\int_{0}^{\pi }f\left( s\right) ~ds$ for any $a\in \mathbb{R}$, and also as $f$ is odd we have that the last integral vanishes). 

We compute 
\begin{eqnarray*}
d\phi _{p}\left( -\sin x~u,\cos x~v\right) &=&\left. \frac{d}{dt}\right\vert
_{0}\phi \left( \left( \cos \left( x+t\right) u,\sin \left( x+t\right)
v\right) \right) \\
&=&\left. \frac{d}{dt}\right\vert _{0}\int_{0}^{x+t}f\left( s\right)
~ds=f\left( x\right) \text{.}
\end{eqnarray*}%
Hence $\left\langle \text{grad}_{\,p}\phi ,\left( -\sin x~u,\cos x~v\right)
\right\rangle =f\left( x\right) $.

Let $X\in T_{p}S^{n}=p^{\bot }$ with $X\bot \left( -\sin x~u,\cos x~v\right) 
$. Suppose that $X=\left( u^{\prime },v^{\prime }\right) $ (in particular, $%
u^{\prime }\bot u$ and $v^{\prime }\bot v$). Let $\alpha $ and $\beta $ be
smooth curves on $S^{k-1}$ and $S^{\ell -1}$ with $\alpha \left( 0\right)
=u,\alpha ^{\prime }\left( 0\right) =u^{\prime }/\cos x,\beta \left(
0\right) =v$ and $\beta ^{\prime }\left( 0\right) =v^{\prime }/\sin x$. We
compute 
\begin{equation*}
d\phi _{p}\left( X\right) =\left. \frac{d}{dt}\right\vert _{0}\phi \left(
\cos x~\alpha \left( t\right) ,\sin x~\beta \left( t\right) \right) =-\left. 
\frac{d}{dt}\right\vert _{0}\int_{0}^{x}f\left( s\right) ~ds=0\text{.}
\end{equation*}%
Therefore grad$~\phi =W$.

Finally, we have to verify that $\phi $ is a $c$-convex potential. For this,
we resort to the case $n=1$ and use $SO\left( n\right) $-invariance. Since
the sphere is compact, given $p\in S^{n}$ there exists $q_{o}\in S^{n}$ such
that 
\begin{equation*}
\phi ^{c}(p)=\sup_{q\in S^{n}}\{-c_{p}(q)-\phi (q)\}=-c_{p}(q_{o})-\phi
(q_{o}),
\end{equation*}%
where $c_{p}(q)=c(p,q)$. Next we observe that $p$ and $q_{o}$ lie in the
same great circle through $\left( \mathbb{R}^{k}\times \left\{ 0\right\}
\right) \cap S^{n}$ and $\left( \left\{ 0\right\} \times \mathbb{R}%
^{k}\right) \cap S^{n}$. In fact, since $q_{o}$ is the maximum of the map 
\begin{equation*}
q\in S^{n}\mapsto -c_{p}(q)-\phi (q)\in \mathbb{R}\text{,}
\end{equation*}%
it follows that $(dc_{p})_{q_{o}}=-(d\phi )_{q_{o}}$. So, their kernels
coincide. Suppose $q_{o}=\left( \xi ,\eta \right) $. We saw above that Ker~$%
(d\phi )_{q_{o}}=\xi ^{\bot }\times \eta ^{\bot }$. 

Also, a straightforward computation yields that Ker~$%
(dc_{p})_{q_{o}}=q_{o}^{\bot }\cap p^{\bot }$. If $q_{o}\neq \pm p$,
then $p\bot q_{o}^{\bot }\cap p^{\bot }=\xi ^{\bot }\times \eta ^{\bot }$,
and so 
\begin{equation*}
p\in \left( \xi ^{\bot }\times \eta ^{\bot }\right) ^{\bot }=\text{span}%
~\left\{ \left( \xi ,0\right) ,\left( 0,\eta \right) \right\} \text{.}
\end{equation*}

We suppose first that $p$ is in the circle $S:=$ span$\left\{
e_{0},e_{k+1}\right\} \cap S^{n}$ and, by the above observation, we see that 
\begin{equation*}
\phi ^{c}(p)=\sup_{q\in S^{n}}\left\{ -c_{p}(q)-\phi (q)\right\} =\max_{q\in
S}~\{-c_{p}(q)-\phi (q)\}=\phi _{o}^{c}(p),
\end{equation*}%
where $\phi _{o}=\left. \phi \right\vert _{S}$. Now, if $p\in S_{p}:=$ span~$%
\left\{ \left( u,0\right) ,\left( 0,v\right) \right\} \cap S^{n}$ with $u,v$
unit vectors, let $R\in SO\left( k\right) \times SO\left( \ell \right) $
such that $R(u,0)=e_{0}$, $R\left( 0,v\right) =e_{k+1}$. Since 
\begin{equation*}
c(p,q)=c(R(p),R(q))\hspace{1cm}\text{and}\hspace{1cm}\phi \circ R(q)=\phi (q)
\end{equation*}%
for all $p,q\in S^{n}$, we have that 
\begin{equation*}
\phi ^{c}(p)=\max_{q\in S_{p}}~\{-c_{p}(q)-\phi (q)\}=\max_{q\in
S}~\{-c_{R(p)}(q)-\phi (q)\}=\phi _{o}^{c}(R(p)),
\end{equation*}%
where the first equality holds again by the above observation. In the same
way, $\phi ^{cc}(p)=\phi _{o}^{cc}\circ R(p)$. We recall that at the
beginning of the proof we saw that $T|_{S}$ is optimal, so $\phi
_{o}^{cc}=\phi _{o}$. Then, for $p\in S^{n}$ we obtain that 
\begin{equation*}
\phi ^{cc}(p)=\phi _{o}^{cc}\circ R(p)=\phi _{o}\circ R(p)=\phi (p)\text{.}
\end{equation*}%
Therefore, $\phi $ is a $c$-convex potential, as we wanted to see, and the
proof of the theorem is complete. 
\end{proof} 

\bigskip

We have a statement similar to the one of the theorem above. The proof is
essentially the same, considering only the case $\varepsilon =1$ and $R\in
SO\left( n\right) $ such that $R\left( e_{0}\right) =e_{0}$ and $R\left(
0,v\right) =e_{1}$.

\begin{theorem}
\label{CorderoConforme}Let $g:\mathbb{R}\rightarrow \mathbb{R}$ be a smooth $%
2\pi $-periodic odd function with $g^{\prime }+1>0$. Then $%
T:S^{n}\rightarrow S^{n}$, 
\begin{equation}
T\left( \cos x,\sin x~v\right) =\left( \cos \left( x+g\left( x\right)
\right) ,\sin \left( x+g\left( x\right) \right) v\right) \text{,}
\label{Tconforme}
\end{equation}%
where $v$ is a unit vector in $\mathbb{R}^{n}$, is well defined and is the
optimal transportation map among all maps sending \emph{vol}$_{S^{n}}$ to $%
T_{\#}\left( \text{\emph{vol}}_{S^{n}}\right) $.
\end{theorem}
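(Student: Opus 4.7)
The plan is to mirror the four-step argument used for Theorem \ref{CorderoProyectiva}, carrying over every construction verbatim while taking advantage of two simplifications flagged in the remark just before the statement: the first factor has collapsed from $\mathbb{R}^{k}$ (with $k\geq 1$) to the scalar line $\mathbb{R}$, so the rotational symmetry group reduces from $SO(k)\times SO(\ell)$ to the $SO(n)$ subgroup fixing $e_{0}$; and the $\varepsilon=-1$ branch of the well-definedness analysis disappears entirely.

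For well-definedness, suppose $p=(\cos x,\sin x\,v)=(\cos y,\sin y\,V)$ with $v,V$ unit vectors in $\mathbb{R}^{n}$. Since the first coordinate is now a scalar, the equality $\cos x=\cos y$ holds without any sign ambiguity, so $y=\pm x+2m\pi$; this is precisely the $\varepsilon=1$ subcase of the projective argument, and checking that both choices of $y$ produce the same image uses only that $g$ is odd and $2\pi$-periodic, with no appeal to $\pi$-periodicity.

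Next I would treat the $n=1$ model: parameterize $S^{1}$ as $x\mapsto(\cos x,\sin x)$ so that $T$ lifts to $\widetilde{T}(x)=x+g(x)$. A primitive $\psi$ of $\widetilde{T}$ is convex since $g^{\prime}+1>0$, and $\psi(x)-x^{2}/2$ descends to $\mathbb{R}/2\pi\mathbb{Z}$ because $\int_{0}^{2\pi}g=0$ (by oddness and $2\pi$-periodicity). Theorem \ref{CE} then yields optimality of $T$ on $S^{1}$. For general $n$ I would define the vector field $W(p)=g(x)(-\sin x,\cos x\,v)$ on $S^{n}$ --- well defined by the same check as for $T$ --- so that $T(p)=\mathrm{Exp}_{p}(W(p))$, and propose the potential $\phi(p)=\int_{0}^{x}g(s)\,ds$, which descends to $S^{n}$ by the same two identities. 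A direct computation identical to the one in Theorem \ref{CorderoProyectiva} gives $\mathrm{grad}_{p}\phi=W$: the derivative along the tangent direction $(-\sin x,\cos x\,v)$ to the meridian through $p$ and $\pm e_{0}$ equals $g(x)$, while along $SO(n)$-orbit directions (those perpendicular to this tangent) the derivative vanishes since $x$ is locally constant.

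The step I expect to require the most care is the verification that $\phi$ is $c$-convex, and it is precisely here that the $SO(n)$ symmetry replaces the $SO(k)\times SO(\ell)$ symmetry of the projective case. Given $p\in S^{n}$, let $q_{o}$ realize the supremum defining $\phi^{c}(p)$. Matching $\mathrm{Ker}(dc_{p})_{q_{o}}=q_{o}^{\perp}\cap p^{\perp}$ against $\mathrm{Ker}(d\phi)_{q_{o}}$ --- which in our setting takes the form $\{0\}\times V^{\perp}$ when $q_{o}=(\cos y,\sin y\,V)$, exactly the $k=1$ specialization of the projective expression --- forces $p$ and $q_{o}$ to lie on a common meridian through $\pm e_{0}$. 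Choosing $R\in SO(n)$ acting on the last $n$ coordinates with $R(e_{0})=e_{0}$ and $R(0,V)=e_{1}$, the invariances $c(Rp,Rq)=c(p,q)$ and $\phi\circ R=\phi$ reduce both $\phi^{c}(p)$ and $\phi^{cc}(p)$ to the corresponding quantities for $\phi_{o}:=\phi|_{S}$ on the model great circle $S=\mathrm{span}\{e_{0},e_{1}\}\cap S^{n}$. The $n=1$ base case above yields $\phi_{o}^{cc}=\phi_{o}$, whence $\phi^{cc}(p)=\phi_{o}^{cc}(R(p))=\phi_{o}(R(p))=\phi(p)$. Theorem \ref{McCann} then delivers the optimality of $T$.
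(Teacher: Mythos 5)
Your proposal is correct and follows essentially the same route as the paper: the authors prove Theorem \ref{CorderoConforme} by noting that the argument for Theorem \ref{CorderoProyectiva} carries over verbatim, restricted to the case $\varepsilon=1$ in the well-definedness check and using $R\in SO(n)$ with $R(e_{0})=e_{0}$, $R(0,v)=e_{1}$ in the $c$-convexity step, which is exactly the adaptation you spell out (circle model via Theorem \ref{CE}, potential $\phi(p)=\int_{0}^{x}g$, gradient identification, and reduction to the great circle $\mathrm{span}\{e_{0},e_{1}\}\cap S^{n}$ by symmetry).
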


\noindent \textbf{Graphs of optimal maps as Lagrangian submanifolds. }Let $M$
be a Riemannian manifold and $c=\frac{1}{2}d^{2}$, as in the introduction.
Kim, McCann and Warren found in \cite{KimWarren} a necessary condition for a
map of $M$ to be optimal, which will be useful in the proof of Theorem \ref%
{proyectivas}. They proved that the graph of the optimal map is calibrated
a.e. by a certain split special Lagrangian calibration on an open dense
subset of $M\times M$ endowed with a certain neutral metric. In particular,
the graph turns out to be Lagrangian with respect to the symplectic form $%
\omega $ on an open dense subset of $M\times M$ defined by $\omega =d\alpha $%
, with $\alpha $ the $1$-form on the same subset of $M\times M$ given by 
\begin{equation*}
\alpha \left( Z\right) =dc\left( X,0\right) =\left( X,0\right) \left(
c\right) \text{,}
\end{equation*}%
where $Z_{\left( p,q\right) }=\left( X_{p},Y_{q}\right) $ after the natural
identification $T_{\left( p,q\right) }\left( M\times M\right) \approx
T_{p}M\times T_{q}M$ (see \cite{Kim}). It is clear that $\omega \left(
\left( u,0\right) ,\left( v,0\right) \right) =\omega \left( \left(
0,u\right) ,\left( 0,v\right) \right) =0$ for any pair of vector fields $u,v$
on $M$.

In the proposition below we describe explicitly $\omega $ for the case where 
$M$ is the sphere $S^{n}$. We will use it in Proposition \ref{Lagrangian}.

\begin{proposition}
\label{omega}Let $p,q$ be a pair of non-antipodal distinct points on $S^{n}$
and let $\gamma :\left[ 0,d\right] \rightarrow S^{n}$ be the unique unit
speed shortest geodesic joining $p$ with $q$. Suppose that $\mathcal{B}%
=\left\{ u_{1},\dots ,u_{n}\right\} $ is an orthonormal basis of $T_{p}S^{n}$
with $u_{1}=\gamma ^{\prime }\left( 0\right) $ and denote by $v_{i}$ the
parallel transport of $u_{i}$ along $\gamma $ from $0$ to $d$ $\mathcal{(}$%
in particular, $v_{1}=\gamma ^{\prime }\left( d\right) $ and $\mathcal{\bar{B%
}}=\left\{ v_{1},\dots ,v_{n}\right\} $ is an orthonormal basis of $%
T_{q}S^{n}\mathcal{)}$. Let $\mathcal{C}$ be the oriented basis of $%
T_{\left( p,q\right) }\left( S^{n}\times S^{n}\right) \approx
T_{p}S^{n}\times T_{q}S^{n}$ obtained by juxtaposing $\mathcal{B}$ with $%
\mathcal{\bar{B}}$. Then%
\begin{equation*}
\left[ \omega _{\left( p,q\right) }\right] _{\mathcal{C}}=\left( 
\begin{array}{cc}
0 & A \\ 
-A & 0%
\end{array}%
\right) \text{,}
\end{equation*}%
where $A=$ $\emph{diag}$~$\left( 1,\frac{d}{\sin d}~I_{n-1}\right) $, with $%
I_{k}$ the $k\times k$ identity matrix.
\end{proposition}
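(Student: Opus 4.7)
My plan is to compute $\omega=d\alpha$ at $(p,q)$ directly, using Cartan's formula together with the explicit form of Jacobi fields on the round sphere.

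First, I would extend $u_{i}$ and $v_{j}$ to local vector fields $\tilde{u}_{i}=(u_{i},0)$ and $\tilde{v}_{j}=(0,v_{j})$ near $(p,q)$ in $S^{n}\times S^{n}$ that commute pairwise (for instance as coordinate fields in a product chart). Since $\alpha$ annihilates every vector of the form $(0,Y)$ by its very definition, Cartan's formula collapses to
\[
\omega_{(p,q)}((u_{i},0),(0,v_{j}))=-\tilde{v}_{j}(\alpha(\tilde{u}_{i}))(p,q),
\]
while the observation in the excerpt that $\omega$ vanishes on $((X,0),(X',0))$ and $((0,Y),(0,Y'))$ pairs immediately gives the block-antidiagonal shape of the matrix.

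Next, I would invoke the standard identity
\[
\mathrm{grad}_{x}(c(\cdot,y))\big|_{x=p}=-\mathrm{Exp}_{p}^{-1}(y),
\]
valid since $q$ is non-antipodal and distinct from $p$. Therefore $\alpha(\tilde{u}_{i})$ at $(p,y)$ equals $-\langle u_{i},\mathrm{Exp}_{p}^{-1}(y)\rangle$, and writing $w(t)=\mathrm{Exp}_{p}^{-1}(\sigma(t))$ for a curve $\sigma$ with $\sigma(0)=q$, $\sigma'(0)=v_{j}$, I obtain
\[
\omega((u_{i},0),(0,v_{j}))=\langle u_{i},w'(0)\rangle,
\]
so the whole task reduces to computing $w'(0)\in T_{p}S^{n}$ when the endpoint curve $\sigma$ has initial velocity $v_{j}$.

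To obtain $w'(0)$, I view it as the initial covariant derivative of a Jacobi field. The variation $\Gamma(s,t)=\mathrm{Exp}_{p}(sw(t))$ satisfies $\Gamma(0,t)=p$ and $\Gamma(1,t)=\sigma(t)$, so its variational field $J(s)=\partial_{t}\Gamma|_{t=0}$ is Jacobi along $s\mapsto\mathrm{Exp}_{p}(sw(0))$ with $J(0)=0$, $J'(0)=w'(0)$ and $J(1)=v_{j}$. Rescaling to the unit-speed geodesic $\gamma(t)=\mathrm{Exp}_{p}(tu_{1})$, the field $\tilde{J}(t)=J(t/d)$ is Jacobi along $\gamma$ with $\tilde{J}(0)=0$, $\tilde{J}(d)=v_{j}$ and $w'(0)=d\,\tilde{J}'(0)$. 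Because $S^{n}$ has constant sectional curvature $1$, after decomposition along and transversal to $\gamma'$ the tangential component satisfies $f''=0$, while each normal component $g_{i}V_{i}(t)$ (with $V_{i}$ the parallel transport of $u_{i}$) satisfies $g_{i}''+g_{i}=0$. Matching boundary conditions at $t=0$ and $t=d$ singles out $\tilde{J}(t)=(t/d)\gamma'(t)$ when $j=1$ and $\tilde{J}(t)=(\sin t/\sin d)V_{j}(t)$ when $j\geq 2$. Differentiating at $0$ and multiplying by $d$ yields $w'(0)=u_{1}$ for $j=1$ and $w'(0)=(d/\sin d)u_{j}$ for $j\geq 2$, so $\omega((u_{i},0),(0,v_{j}))=A_{ij}$ with $A=\mathrm{diag}(1,(d/\sin d)I_{n-1})$, exactly as claimed.

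The main technical point is the Jacobi-field calculation combined with the unit-speed reparametrization: the factor $d$ in the tangential direction cancels the reparametrization Jacobian to yield $1$, while in the normal directions it combines with $1/\sin d$ to give the stated $d/\sin d$; keeping these normalizations straight is the only subtlety, after which the block form with the announced off-diagonal $A$ is immediate.
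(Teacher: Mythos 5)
Your proposal is correct. Both you and the paper start from the same reduction: extending $u_i,v_j$ to commuting fields $(U,0)$, $(0,V)$ and using $\alpha(0,V)=0$ to get $\omega_{(p,q)}\left(\left(u_i,0\right),\left(0,v_j\right)\right)=-(0,v_j)\left(\alpha(U,0)\right)$, together with the already-noted vanishing of $\omega$ on pairs from the same factor for the block shape. Where you diverge is in evaluating this mixed derivative of $c$. The paper normalizes $p=e_{0}$, $q=\cos d\,e_{0}+\sin d\,e_{1}$ by two-point homogeneity, writes $c=\tfrac12\arccos^{2}\langle\cdot,\cdot\rangle$, differentiates along explicit geodesic variations $\sigma_i,\tau_j$, and uses the $SO(n-1)$-symmetry fixing $\gamma$ to reduce the whole matrix to the entries $A_{11},A_{12},A_{21},A_{32},A_{22}$. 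You instead invoke the identity $\mathrm{grad}_{x}\,c(\cdot,y)\big|_{x=p}=-\mathrm{Exp}_{p}^{-1}(y)$ (valid here since $q$ is off the cut locus of $p$) to turn the entry into $\langle u_i,\tfrac{d}{dt}\big|_{0}\mathrm{Exp}_{p}^{-1}(\sigma(t))\rangle$, and compute that derivative as $d\,\tilde J^{\prime}(0)$ for the unique Jacobi field along $\gamma$ with $\tilde J(0)=0$, $\tilde J(d)=v_{j}$ (uniqueness holds because $d<\pi$ is below the first conjugate distance). Your constant-curvature solutions $t\gamma^{\prime}(t)/d$ and $\sin t\,V_{j}(t)/\sin d$, with the rescaling bookkeeping you kept track of, give all entries of $A$ at once, with no need for the symmetry reduction. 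What each route buys: the paper's computation is elementary and self-contained (just differentiating the arccos formula), while yours is more conceptual and generalizes verbatim to any space form by replacing $\sin$ with the relevant solution of $g^{\prime\prime}+Kg=0$, at the price of citing the gradient-of-$\tfrac12 d^{2}$ identity and standard Jacobi field theory.
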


\begin{proof}
Let $U,V$ be vector fields defined on open neighborhoods of $p$ and $q$
in $S^{n}$, respectively. Denote $u=U_{p}$, $v=V_{q}$. If $\sigma $ and $%
\tau $ are curves in $S^{n}$ with $\sigma \left( 0\right) =p$, $\sigma
^{\prime }\left( 0\right) =u$, $\tau \left( 0\right) =q $ and $\tau ^{\prime
}\left( 0\right) =v$, then%
\begin{eqnarray}
\omega _{\left( p,q\right) }\left( \left( u,0\right) ,\left( 0,v\right)
\right) &=&d\alpha _{\left( p,q\right) }\left( \left( u,0\right) ,\left(
0,v\right) \right) =-\left( 0,v\right) _{\left( p,q\right) }\left( dc\left(
U,0\right) \right)  \label{dsdt} \\
&=&-\left. \frac{d}{dt}\right\vert _{0}dc_{\left( p,\tau \left( t\right)
\right) }\left( U_{p},0\right) =-\left. \frac{d}{dt}\right\vert _{0}\left. 
\frac{d}{ds}\right\vert _{0}c\left( \sigma \left( s\right) ,\tau \left(
t\right) \right) \text{,}  \notag
\end{eqnarray}%
where the second equality follows from the fact that $\alpha \left(
0,V\right) =0$ and $\left[ \left( U,0\right) ,\left( 0,V\right) \right] =0$.

Since $S^{n}$ is two-point homogeneous, we may suppose without loss of
generality that $p=e_{0}$, $q=\cos d~e_{0}+\sin d~e_{1}$ with $0<d<\pi $ (so 
$\gamma \left( t\right) =\cos t~e_{0}+\sin t~e_{1}$) and $u_{i}=e_{i}$ ($%
i=1,\dots ,n$). Let $\sigma _{i},\tau _{j}$ be the geodesics in $S^{n}$ with 
$\sigma _{i}\left( 0\right) =p$ and $\sigma _{i}^{\prime }\left( 0\right)
=u_{i}$, $\tau _{j}\left( 0\right) =q$ and $\tau _{j}^{\prime }\left(
0\right) =v_{j}$. We have $\sigma _{1}=\gamma $, $\tau _{1}\left( t\right)
=\gamma \left( t+d\right) $ and 
\begin{equation*}
\sigma _{i}\left( s\right) =\cos s~e_{0}+\sin s~e_{i}\text{,\ \ \ \ \ \ \ \
\ }\tau _{2}\left( t\right) =\cos t\left( \cos d~e_{0}+\sin d~e_{1}\right)
+\sin t~e_{2}\text{.}
\end{equation*}%
We have $A_{ij}=\omega \left( \left( u_{i},0\right) ,\left( 0,v_{j}\right)
\right) $. By the $SO\left( n-1\right) $-symmetries of $S^{n}$ fixing the
trajectory of $\gamma $ it suffices to show that $A_{11}=1$, $%
A_{12}=A_{21}=A_{32}=0$ and $A_{22}=\frac{d}{\sin d}$.

For any manifold $M$, if $\gamma $ is length minimizing on an open interval
containing $\left[ 0,d\right] $ we have $d\left( \gamma \left( s\right)
,\gamma \left( d+t\right) \right) =d+t-s$ for $s,t$ near $0$ and so by (\ref%
{dsdt}) 
\begin{equation*}
A_{11}=-\left. \frac{d^{2}}{dsdt}\right\vert _{\left( 0,0\right) }\frac{1}{2}%
\left( d+t-s\right) ^{2}=1\text{.}
\end{equation*}%
On the other hand, since $c\left( \cdot ,\cdot \right) =\frac{1}{2}\arccos
^{2}\left\langle \cdot ,\cdot \right\rangle $, we have by (\ref{dsdt}) that $%
A_{ij}=f_{ij}^{\prime }\left( 0\right) $, where%
\begin{equation*}
f_{ij}\left( t\right) =-\left. \frac{d}{ds}\right\vert _{0}c\left( \sigma
_{i}\left( s\right) ,\tau _{j}\left( t\right) \right) =\frac{\arccos \left(
\left\langle \sigma _{i}\left( 0\right) ,\tau _{j}\left( t\right)
\right\rangle \right) }{\sqrt{1-\left\langle \sigma _{i}\left( 0\right)
,\tau _{j}\left( t\right) \right\rangle ^{2}}}\left\langle \sigma
_{i}^{\prime }\left( 0\right) ,\tau _{j}\left( t\right) \right\rangle \text{.%
}
\end{equation*}%
Since $\left\langle \sigma _{i}^{\prime }\left( 0\right) ,\tau _{j}\left(
t\right) \right\rangle $ vanishes for $\left( i,j\right) =\left( 2,1\right)
,\left( 3,2\right) $, $f_{21}=f_{32}\equiv 0$. Also, $f_{12}$ is an even
function and so $f_{12}^{\prime }\left( 0\right) =0$. Hence, $%
A_{21}=A_{32}=A_{12}\equiv 0$. Now, putting $\cos d\cos t=\cos \left(
x\left( t\right) \right) $ for a function $x$ with values in the interval $%
\left( 0,\pi \right) $ (in particular $x\left( 0\right) =d$ and $x^{\prime
}\left( 0\right) =0$), we have

\begin{equation*}
A_{22}=f_{22}^{\prime }\left( 0\right) =\left. \frac{d}{dt}\right\vert _{0}%
\frac{\arccos \left( \cos x\left( t\right) \right) }{\sqrt{1-\cos
^{2}x\left( t\right) }}\sin t=\left. \frac{d}{dt}\right\vert _{0}\frac{%
x\left( t\right) }{\sin x\left( t\right) }\sin t=\frac{d}{\sin d}\text{.}
\end{equation*}%
Therefore the coefficients of the matrix $A$ are as desired.
\end{proof}

\section{Proofs of the theorems}

\begin{proof}[Proof of Theorem \ref{conformes}] 
Let $S\in G$. Thus, $S=\exp (A)O$, where $A\in \mathfrak{p}$ (see (\ref{p}))  and $O\in
SO(n+1)$. Clearly, $O$ preserves $\text{vol}_{S^{n}}$. So, by the uniqueness
of the Brenier-McCann polar factorization (see the introduction), we have to
prove that $\exp (A)$ is the optimal transportation map between $\text{vol}%
_{S^{n}}$ and $S_{\#}(\text{vol}_{S^{n}})$. Without lost of generality we
can take $A=a\left( 
\begin{array}{cc}
0 & e_{0}^{t} \\ 
e_{0} & 0%
\end{array}%
\right) \in \mathfrak{p}$, where $a>0$. In fact, if $R\in SO\left( n\right) $
satisfies $R\left( ae_{0}\right) =v$, then $A$ and $\left( 
\begin{array}{cc}
0 & v^{t} \\ 
v & 0%
\end{array}%
\right) $ are conjugate by the isometry diag~$\left( 1,R\right) $ of $S^{n}$%
, and isometries preserve optimality.

Next we compute $\exp (A)$ as a map of the sphere. Putting 
\begin{equation*}
H_{t}=\left( 
\begin{array}{cc}
\cosh at & \sinh at \\ 
\sinh at & \cosh at%
\end{array}%
\right)
\end{equation*}%
we have that $\exp \left( tA\right) =$ diag~$\left( H_{t},I_{n}\right) $.

Let $q=\left( u_{0},\dots ,u_{n}\right) \in S^{n}$ and let $\gamma \left(
t\right) =\exp \left( tA\right) \left( q\right) $. By (\ref{APuntoU}), 
\begin{equation*}
\gamma \left( t\right) =\left( \sinh at+u_{0}\cosh at,u_{1},\dots
,u_{n}\right) /h\left( u_{0},t\right)
\end{equation*}%
with $h(u,t)=u\sinh at+\cosh at$. Now, $\gamma $ is the integral curve
through $q$ of the vector field $V$ on the sphere defined by 
\begin{equation*}
V\left( p\right) =\left. \frac{d}{dt}\right\vert _{0}\exp \left( tA\right)
\left( p\right) =ae_{0}-\left\langle ae_{0},p\right\rangle p\text{.}
\end{equation*}%
Hence, 
\begin{equation*}
\left\Vert \gamma ^{\prime }\left( t\right) \right\Vert =\left\Vert V\left(
\gamma \left( t\right) \right) \right\Vert ={\frac{a\sqrt{1-u_{0}^{2}}}{%
h(u_{0},t)}}\text{.}
\end{equation*}

Since $\gamma $ lies on a meridian through $e_{0}$, which is a geodesic, and 
$V$ vanishes at $\pm e_{0}$, the distance $D\left( q\right) $ from $q$ to $%
\exp \left( A\right) \left( q\right) $ is the length of $\left. \gamma
\right\vert _{\left[ 0,1\right] }$. Hence, $\exp (A)(q)=\text{Exp}_{q}(U(q))$%
, where $U\left( q\right) $ is a tangent vector pointing in the direction of 
$\gamma ^{\prime }\left( 0\right) $ with $\Vert U(q)\Vert =D\left( q\right) $%
. We have 
\begin{equation*}
D\left( \cos x,\sin x~v\right) =\int_{0}^{1}{\frac{a\left\vert \sin
x\right\vert }{h(\cos x,t)}}\,dt.
\end{equation*}%
The appropriate choice of sign yields that $\exp \left( A\right) $ equals $T$
as in (\ref{Tconforme}) with 
\begin{equation}
g\left( x\right) =-\int_{0}^{1}{\frac{a\sin x}{h(\cos x,t)}}\,dt\text{.}
\label{g}
\end{equation}

Now we apply Theorem \ref{CorderoConforme} to prove that $\exp \left(
A\right) $ is optimal. Since $g$ is odd and $2\pi $-periodic, we have to
verify only that $g^{\prime }+1>0$. We compute 
\begin{equation*}
g^{\prime }(x)=-\int_{0}^{1}\frac{d}{dx}{\frac{a\sin x}{h(\cos x,t)}}%
\,dt=-\int_{0}^{1}\frac{h_{t}(\cos x,t)}{h(\cos x,t)^{2}}~dt=\dfrac{1}{\cos
x\sinh a+\cosh a}-1\text{,}
\end{equation*}%
where $h_{t}=\partial h/\partial t$. Hence $g^{\prime }+1>0$, as desired.
\end{proof}

\bigskip

\begin{proof}[Proof of Theorem \ref{proyectivas}] The polar decomposition of $S\in
GL_{+}\left( n+1\right) $ is $S=PO$, where $O\in SO\left( n+1\right) $ and $P
$ is a positive definite self adjoint linear transformation. As in the
conformal case, we may suppose that $P$ is diagonal and we have to prove
that the induced operator $T\left( q\right) =P\left( q\right) /\left\Vert
P\left( q\right) \right\Vert $ on $S^{n}$ is optimal.

We consider first the case when $P$ has at least three distinct eigenvalues.
As a consequence of the necessary condition for optimality stated in the
preliminaries we have that $T$ is not optimal, since otherwise the graph of $%
T$ would be Lagrangian a.e., contradicting Proposition \ref{Lagrangian}
below.

So now we assume that $P$ has exactly two distinct eigenvalues, say $\lambda 
$ and $\mu $, with respective eigenspaces of dimensions $k$ and $\ell $ (the
case when $P$ is a multiple of the identity is trivial). We may suppose that 
\begin{equation*}
P=\text{diag}~\left( e^{\frac{a}{2}}I_{k},e^{-\frac{a}{2}}I_{\ell }\right)
=\exp B\text{,}
\end{equation*}%
where $B=\frac{a}{2}$ diag~$\left( I_{k},-I_{\ell }\right) $ for some $a\in 
\mathbb{R}$. In fact, $T$ does not change if we take instead of $P$ a
positive multiple $cP$ of it (we chose $c=1/\sqrt{\lambda \mu }$ and $a=\log
\left( \lambda /\mu \right) $). We compute 
\begin{equation}
T\left( \cos x~u,\sin x~v\right) =\frac{\left( e^{a/2}\cos x~u,e^{-a/2}\sin
x~v\right) }{\left( e^{a}\cos ^{2}x+e^{-a}\sin ^{2}x\right) ^{1/2}}\text{,}
\label{expAmedio}
\end{equation}%
where $u\in S^{k-1}$ and $v\in S^{\ell -1}$. We see that $T$ preserves the
meridian $S^{n}\cap $ span~$\left\{ u,v\right\} $ and moreover it has the
form (\ref{Tproyectiva}) for some $\pi $-periodic odd function $f$. Now,
identifying span~$\left\{ u,v\right\} $ with $\mathbb{C}$ ($u=1,v=i$) we
have by Lemma \ref{DoubleC} below that 
\begin{equation*}
e^{i2\left( x+g\left( x\right) \right) }=e^{i\left( 2x+f\left( 2x\right)
\right) }
\end{equation*}%
where $g$ is the $2\pi $-periodic odd function in (\ref{g}). Since $f\left(
0\right) =g\left( 0\right) =0$, we have that $f\left( x\right) =2g\left( 
\frac{1}{2}x\right) $ for all $x$. Now, $g^{\prime }+1>0$ by the proof of
Theorem \ref{conformes}, hence $f^{\prime }+1>0$. Therefore, $T$ is optimal
by Theorem \ref{CorderoProyectiva}. 
\end{proof}

\bigskip

Next we state the lemma we used in the proof above. It is well known that
there is a double covering morphism $PSL\left( 2,\mathbb{R}\right)
\rightarrow O_{o}\left( 1,2\right) $. We only need the morphism restricted
to some subgroups isomorphic to the circle. Let 
\begin{equation*}
A=\frac{a}{2}\left( 
\begin{array}{cc}
1 & 0 \\ 
0 & -1%
\end{array}%
\right) \text{\ \ \ \ \ \ and\ \ \ \ \ \ }B=a\left( 
\begin{array}{cc}
0 & 1 \\ 
1 & 0%
\end{array}%
\right)
\end{equation*}%
with $a>0$. Then $\exp A$ and $\exp B$ induce a projective and a conformal
map of $S^{1}$ as in (\ref{Aproyectiva}) and (\ref{APuntoU}), respectively.
The following lemma asserts that the first one double covers the second one.

\begin{lemma}
\label{DoubleC}Let $A$ and $B$ be as above and let $\rho :S^{1}\rightarrow
S^{1}$, $\rho \left( x,y\right) =\left( x^{2}-y^{2},2xy\right) $ \emph{(}%
that is, $\rho \left( z\right) =z^{2}$ after the identification $\mathbb{R}%
^{2}=\mathbb{C}$\emph{)}. Then the following diagram commutes%
\begin{equation*}
\begin{array}{ccc}
S^{1} & \overset{\exp A}{\longrightarrow } & S^{1} \\ 
\downarrow \rho &  & \downarrow \rho \\ 
S^{1} & \overset{\exp B}{\longrightarrow } & S^{1}\text{,}%
\end{array}%
\end{equation*}%
where we are considering the induced conformal and projective maps of the
circle.
\end{lemma}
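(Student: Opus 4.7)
The plan is to verify the commutativity by direct computation, carrying along the complex-number identification $\mathbb{R}^2 = \mathbb{C}$ throughout, so that $\rho(z)=z^2$ is literally squaring.

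First I would write down both exponentials explicitly. For the projective factor, $\exp A = \mathrm{diag}(e^{a/2},e^{-a/2})$, so by (\ref{Aproyectiva}), for $q = e^{i\theta} = (\cos\theta,\sin\theta)$,
\begin{equation*}
(\exp A)\cdot e^{i\theta} \;=\; \frac{e^{a/2}\cos\theta + i\,e^{-a/2}\sin\theta}{\sqrt{e^{a}\cos^{2}\theta + e^{-a}\sin^{2}\theta}}\,.
\end{equation*}
For the conformal factor I would identify $B$ with its natural inclusion $\mathrm{diag}(H_{1},1) \in O_{o}(1,2)$ used in the proof of Theorem \ref{conformes} (with $H_{t}$ as there). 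Applying (\ref{APuntoU}) to the null vector $(1,\cos\psi,\sin\psi)^{t}$ gives, after a short computation,
\begin{equation*}
(\exp B)\cdot e^{i\psi} \;=\; \frac{(\sinh a + \cosh a\cos\psi) + i\sin\psi}{\cosh a + \sinh a\cos\psi}\,.
\end{equation*}
A quick sanity check using $\cosh^{2}-\sinh^{2}=1$ confirms this has modulus one.

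Next I would square the projective formula and simplify. The numerator squared becomes
\begin{equation*}
(e^{a/2}\cos\theta + i e^{-a/2}\sin\theta)^{2} = (e^{a}\cos^{2}\theta - e^{-a}\sin^{2}\theta) + i\,2\sin\theta\cos\theta,
\end{equation*}
and using $\cos^{2}\theta = \tfrac{1+\cos 2\theta}{2}$, $\sin^{2}\theta = \tfrac{1-\cos 2\theta}{2}$ together with the definitions of $\cosh a$ and $\sinh a$, both the real part of the numerator and the denominator reorganize as
\begin{equation*}
e^{a}\cos^{2}\theta - e^{-a}\sin^{2}\theta = \sinh a + \cosh a\cos 2\theta, \qquad e^{a}\cos^{2}\theta + e^{-a}\sin^{2}\theta = \cosh a + \sinh a\cos 2\theta,
\end{equation*}
while the imaginary part becomes $\sin 2\theta$. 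Hence
\begin{equation*}
\rho\bigl((\exp A)\cdot e^{i\theta}\bigr) = \frac{(\sinh a + \cosh a\cos 2\theta) + i\sin 2\theta}{\cosh a + \sinh a\cos 2\theta},
\end{equation*}
which is exactly the formula for $(\exp B)\cdot e^{i\cdot 2\theta} = (\exp B)\circ\rho(e^{i\theta})$. Commutativity of the diagram follows.

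There is no real obstacle here; the main care is just bookkeeping between the two different actions, in particular remembering that $\exp B$ in the statement stands for the induced conformal map on $S^{1}$ (obtained from the $3\times 3$ matrix $\mathrm{diag}(H_{1},1)$ acting on the light cone), not the bare $2\times 2$ exponential. Once the two explicit Möbius-type formulas are on the table, the identification is the standard double-angle / hyperbolic identity computation above.
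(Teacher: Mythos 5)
Your proof is correct and follows essentially the same route as the paper's sketch: write the projective map induced by $\exp A$ (the right-hand side of (\ref{expAmedio}) with $u=v=1$) and the conformal map induced by $\exp B$ via its embedding $\mathrm{diag}(H_{1},1)\in O_{o}(1,2)$, then check commutativity directly. The only difference is that you carry out explicitly the double-angle/hyperbolic computation that the paper calls ``straightforward,'' and your identities are all accurate.
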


\begin{proof} The statement is well-known; we sketch the proof for the sake of
completeness. We have that the projective map on $S^{1}$ induced by $\exp
\left( A\right) $ applied to $\left( \cos x,\sin x\right) $ equals the right
hand side of (\ref{expAmedio}) with $u=v=e_{1}=1$. We also have 
\begin{equation*}
\exp \left( B\right) \cdot \left( \cos x,\sin x\right) =\frac{\left( \cosh
a\cos x+\sinh a,\sin x\right) }{\sinh a\cos x+\cosh a}\text{.}
\end{equation*}%
Now, a straightforward computation yields the commutativity of the diagram.
\end{proof}

\bigskip

We used the following proposition in the proof of Theorem \ref{proyectivas}.
It involves the symplectic form $\omega $ considered in the preliminaries.

\begin{proposition}
\label{Lagrangian}Let $P$ be a positive definite self adjoint operator of $%
\mathbb{R}^{n+1}$ with at least three distinct eigenvalues and let $T$ be
the projective map induced by $P$ on $S^{n}$. Then there exists an open
dense subset $W$ of $S^{n}$ such that \emph{graph}\,$\left( dT_{p}\right) $
is \emph{not} a Lagrangian subspace of $T_{p}S^{n}\times T_{T\left( p\right)
}S^{n}$ with respect to $\omega _{\left( p,T\left( p\right) \right) }$ for
any $p\in W$. In particular \emph{graph}$\,\left( T\right) $ is not a
Lagrangian submanifold a.e. of $S^{n}\times S^{n}$.
\end{proposition}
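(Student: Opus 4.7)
The plan is to reduce to $n=2$, where the Lagrangian condition becomes a single scalar equation I can compute explicitly, and then extend to general $n$ by real-analyticity.

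First I would use that $P$ has at least three distinct eigenvalues to pick eigenvectors $e_{i_0}, e_{i_1}, e_{i_2}$ (after reducing to the case of diagonal $P$) with pairwise distinct eigenvalues, and set $V = \mathrm{span}\{e_{i_0}, e_{i_1}, e_{i_2}\}$. Then $\Sigma := S^n \cap V$ is a totally geodesic $2$-sphere invariant under $T$. Since $\Sigma$ is totally geodesic, ambient and intrinsic distances agree on $\Sigma$; hence the ambient cost $c$ restricts to the intrinsic one and so $\omega$ restricted to $T\Sigma \times T\Sigma$ coincides with the intrinsic $\omega_\Sigma$. If $\mathrm{graph}(dT_p)$ were Lagrangian in the ambient $2n$-dimensional symplectic space, then $\mathrm{graph}(dT_p|_{T_p\Sigma})$, a $2$-dimensional subspace of the $4$-dimensional $T_p\Sigma \times T_{T(p)}\Sigma$, would be isotropic and hence Lagrangian. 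So it suffices to prove that this restricted graph fails to be Lagrangian on an open dense subset of $\Sigma$.

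For $n=2$, by Proposition \ref{omega} the symplectic form has, in the basis $\mathcal{C}$, off-diagonal blocks $\pm A$ with $A = \mathrm{diag}(1, d/\sin d)$, so $\mathrm{graph}(dT_p)$ is Lagrangian iff $(\sin d)\,\langle dT_p u_2, v_1\rangle = d\,\langle dT_p u_1, u_2\rangle$. Here I would use that $v_2 = u_2$ as vectors in $\mathbb{R}^3$, which holds because the geodesic from $p$ to $T(p)$ lies in $\mathrm{span}\{p, Pp\}$ and $u_2$ is perpendicular to that plane (parallel transport along a great circle preserves the normal direction). Combining this with the formula $dT_p(Y) = (PY - \|Pp\|^{-2}\langle P^2p, Y\rangle Pp)/\|Pp\|$ and the orthogonality relations $u_2 \perp p,\,Pp$, a direct calculation should collapse the condition to
\[
\bigl(\cos d - d/\sin d\bigr)\,\langle P^2p, u_2\rangle = 0.
\]
Since $\cos d - d/\sin d < 0$ on $(0, \pi)$, this is equivalent to $\langle P^2 p, u_2\rangle = 0$, and since $u_2$ is parallel to $p \times Pp$ in $\mathbb{R}^3$, to $\det[p, Pp, P^2p] = 0$. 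A Vandermonde expansion gives $\det[p, Pp, P^2p] = p_0 p_1 p_2 \prod_{i<j}(\lambda_j - \lambda_i)$, which for distinct eigenvalues vanishes exactly on the three great circles $\{p_i = 0\}$. Hence $\mathrm{graph}(dT_p|_{T_p\Sigma})$ fails to be Lagrangian on the open dense subset $W' = \{p \in \Sigma : p_0 p_1 p_2 \neq 0\}$ of $\Sigma$.

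For general $n$ the Lagrangian condition is the vanishing of finitely many real-analytic functions on the connected open dense subset $U = \{p \in S^n : T(p) \neq \pm p\}$ (note $T(p) = -p$ is impossible because $P$ is positive definite, and $T(p) = p$ only on eigenvectors of $P$). Since the condition fails on $W' \subset U$, not all of these functions vanish identically, so the Lagrangian set is a proper real-analytic subvariety of $U$, hence nowhere dense; its complement is the desired open dense $W$. The main obstacle will be carrying out the cancellations in the explicit $n=2$ computation and verifying that the condition truly collapses to the clean criterion $\det[p, Pp, P^2 p] = 0$; once that is established, the Vandermonde identity and the real-analytic density argument are routine.
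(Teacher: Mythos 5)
Your proposal is correct, and its computational core coincides with the paper's: the same criterion coming from Proposition \ref{omega}, the same choice $u_2=v_2$ of a unit normal to $\mathrm{span}\{p,Pp\}$ inside the span of three eigendirections, the same Vandermonde determinant, and the same elementary fact $\sin(2d)<2d$ for $d\in(0,\pi)$ (the paper's condition $\langle p,q\rangle\sin d=d$). I checked the step you flag as the main obstacle, and it collapses exactly as you predict: with $\nu=u_2=v_2$ one finds $\langle dT_pu_1,v_2\rangle=\langle P^2p,\nu\rangle/\left(\Vert Pp\Vert^2\sin d\right)$ and $\langle dT_pu_2,v_1\rangle=\cos d\,\langle P^2p,\nu\rangle/\left(\Vert Pp\Vert^2\sin d\right)$, so isotropy of the restricted graph is equivalent to $(\cos d-d/\sin d)\langle P^2p,\nu\rangle=0$, hence to $\det[p,Pp,P^2p]=0$. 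Where you genuinely differ is the globalization: the paper runs the computation directly on $S^n$ and exhibits the explicit open dense (in fact full measure) set $W$ of points with all coordinates nonzero, whereas you restrict to the $T$-invariant totally geodesic $S^2$ (legitimate: the totally geodesic inclusion pulls $c$, hence $\alpha$ and $\omega$, back to the intrinsic ones, and a subspace of an isotropic subspace is isotropic) and then spread the conclusion by real-analyticity. That soft step works, but you should record two facts you only assert: $U$ is connected because, with at least three distinct eigenvalues, every eigenspace has dimension at most $n-1$, so the unit eigenvectors form a subset of codimension at least two; and the isotropy locus is cut out by globally defined analytic data, e.g.\ the pointwise norm of $(\mathrm{id}\times T)^{*}\omega$, which is analytic on all of $S^n$ since $T(p)$ is never antipodal to $p$ ($P$ is positive definite). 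For the final ``a.e.''\ clause, note additionally that the zero set of a nontrivial analytic function is Lebesgue-null, not merely nowhere dense (or simply use the explicit set). In terms of trade-offs, your reduction buys the clean two-dimensional criterion $\det[p,Pp,P^2p]=0$ with no bookkeeping in general dimension, while the paper's direct computation avoids analytic continuation and yields an explicit $W$; indeed your $S^2$ computation runs verbatim at every $p\in S^n$ whose three distinguished coordinates are nonzero, which is precisely the paper's route.
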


\begin{proof} 
We may suppose without lost of generality that $Pe_{i}=\lambda
_{i}e_{i}$ for $i=0,\dots ,n$ and that $\lambda _{0},\,\lambda
_{1},\,\lambda _{2}$ are positive and pairwise different. Let $W$ be the
open subset of $S^{n}$ consisting of all points whose coordinates are all
different from zero. Let $p\in W$ and let $q=T\left( p\right) \in W$ (which
is different from $p$ and $-p$). Let $\mathcal{B}$ and $\mathcal{\bar{B}}$
be as in Proposition \ref{omega}. Let $\mathcal{B}^{\prime }$ be the ordered
basis consisting of all the elements of the basis $\mathcal{\bar{B}}$,
except that $v_{1}$ is substituted for $V_{1}=\frac{\sin d}{d}v_{1}$, and
let $\mathcal{C}^{\prime }$ be the juxtaposition of $\mathcal{B}$ and $%
\mathcal{B}^{\prime }$. Then the matrix of $\omega _{\left( p,q\right) }$
with respect to $\mathcal{C}^{\prime }$ is a multiple of $\left( 
\begin{array}{cc}
0 & I_{n} \\ 
-I_{n} & 0%
\end{array}%
\right) $. It is well-known that the graph of a linear transformation $%
L:T_{p}S^{n}\rightarrow T_{q}S^{n}$ is Lagrangian for $\omega _{\left(
p,q\right) }$ if and only if the matrix of $L$ with respect to the bases $%
\mathcal{B}$ and $\mathcal{B}^{\prime }$ is symmetric.

A straightforward computation yields 
\begin{equation}
dT_{p}(v)=\frac{1}{\left\Vert P(p)\right\Vert }\left( P(v)-\left\langle
P(v),q\right\rangle q\right) =\frac{1}{\left\Vert P(p)\right\Vert }\text{pr}%
_{q^{\bot }}(P(v))  \label{dT}
\end{equation}%
for any $p\in S^{n}$ and $v\in T_{p}S^{n}$, where pr$_{q^{\bot }}$ is the
orthogonal projection onto $q^{\bot }$. The vectors $u_{1}\in T_{p}S^{n}$
and $v_{1}\in T_{q}S^{n}$ as in Proposition \ref{omega} are 
\begin{equation*}
u_{1}=\frac{q-\left\langle p,q\right\rangle p}{\Vert q-\left\langle
p,q\right\rangle p\Vert }\ \text{\ \ \ \ \ \ \ \ and\ \ \ \ \ \ \ \ \ \ }%
v_{1}=\frac{\left\langle p,q\right\rangle q-p}{\Vert \left\langle
p,q\right\rangle q-p\Vert }\text{.}
\end{equation*}%
Now we write $p=\left( x,y\right) $ with $x\in \mathbb{R}^{3}$ and $P\left(
x,y\right) =\left( P_{1}(x),P_{2}(y)\right) $, where $P_{1}=$ diag~$\left(
\lambda _{0},\lambda _{1},\lambda _{2}\right) $. It is easy to verify that
we can take vectors $u_{2}\in T_{p}S^{n}$ and $v_{2}\in T_{q}S^{n}$ as in
Proposition \ref{omega} as follows:%
\begin{equation*}
u_{2}=v_{2}=\left( P_{1}(x)\times x,0\right) /\left\Vert P_{1}(x)\times
x\right\Vert \text{.}
\end{equation*}%
Now we verify that the matrix of $dT_{p}$ with respect to the bases $%
\mathcal{B}$ and $\mathcal{B}^{\prime }$ (recall that $V_{1}=\frac{\sin d}{d}%
v_{1}$) is not symmetric. We call 
\begin{equation*}
a=\left\Vert P(p)\right\Vert \text{,}\hspace{1cm}b=\Vert q-\left\langle
p,q\right\rangle p\Vert =\Vert \left\langle p,q\right\rangle q-p\Vert \ \ \
\ \ \ \ \text{and\ \ \ \ \ \ \ }c=\left\Vert P_{1}(x)\times x\right\Vert \text{%
.}
\end{equation*}%
Straightforward computations using (\ref{dT}) yield that 
\begin{eqnarray*}
\left\langle dT_{p}(u_{1}),v_{2}\right\rangle  &=&\frac{1}{abc}\left\langle
P_{1}^{2}\left( x\right) ,P_{1}(x)\times x\right\rangle  \\
\left\langle dT_{p}(u_{2}),V_{1}\right\rangle  &=&\frac{\sin d}{d}\frac{1}{%
abc}\left\langle q,p\right\rangle \left\langle P_{1}^{2}\left( x\right)
,P_{1}(x)\times x\right\rangle \text{.}
\end{eqnarray*}%
We compute $\left\langle P_{1}^{2}\left( x\right) ,P_{1}(x)\times
x\right\rangle =\left( \lambda _{0}-\lambda _{1}\right) \left( \lambda
_{0}-\lambda _{2}\right) \left( \lambda _{1}-\lambda _{2}\right)
x_{0}x_{1}x_{2}\neq 0$. Hence $\left\langle dT_{p}(u_{1}),v_{2}\right\rangle
=\left\langle dT_{p}(u_{2}),V_{1}\right\rangle $ if and only if $%
\left\langle q,p\right\rangle \sin d=d$, or equivalently 
\begin{equation*}
\sin \left( 2d\right) =2\cos d\sin d=2d\text{,}
\end{equation*}%
which holds only for $d=0$. Therefore the matrix of $dT_{p}$ with respect to
the bases $\mathcal{B}$ and $\mathcal{B}^{\prime }$ is not symmetric for all 
$p\in W$, as desired. 
\end{proof}

\end{document}